\numberwithin{equation}{section}
\newtheorem{thm}{Theorem}[section]
\newtheorem{prop}[thm]{Proposition}
\theoremstyle{plain}
\theoremstyle{plain}
\theoremstyle{definition}
\theoremstyle{remark}
\numberwithin{theorem}{section}
\numberwithin{equation}{section}
\numberwithin{figure}{section}
\begin{document}
\title[Non-transversal intersection]{Non-transversal intersection of the free and fixed boundary in the mean-field theory of superconductivity}

\author[Emanuel Indrei]{Emanuel Indrei}


\def\signei{\bigskip\begin{center} {\sc Emanuel Indrei\par\vspace{3mm}Department of Mathematics\\  
Purdue University\\
West Lafayette, IN 47907, USA\\
email:} {\tt eindrei@purdue.edu}
\end{center}}

\makeatletter
\def\blfootnote{\xdef\@thefnmark{}\@footnotetext}
\makeatother

\date{}

\maketitle

\begin{abstract}
Non-transversal intersection of the free and fixed boundary is shown to hold and a classification of blow-up solutions is given for obstacle problems generated by fully nonlinear uniformly elliptic operators in two dimensions which appear in the mean-field theory of superconducting vortices. 
\end{abstract}

\section{Introduction} 

Strong $L^2$-solutions are considered for the following PDE  

\begin{equation}  \label{eqF}
\begin{cases}
F(D^{2}u)=\chi_\Omega & \text{a.e. in }B_{1}^{+}\\
u=0 & \text{on }B'_{1}
\end{cases}
\end{equation}
\vskip .2in 
\noindent where $u \in W^{2,2}(B_{1}^{+})$, $F$ is a convex $C^1$ fully nonlinear uniformly elliptic operator, $\Omega \subset \mathbb{R}^2$ is an (a priori unknown) open set, and the free boundary is $\Gamma=\partial \Omega \cap B_1^+$. The assumptions imply $u \in W^{2,p}(B_{1}^{+})$ for all $p<\infty$ and $u$ satisfies \eqref{eqF} in the viscosity sense \cite{MR1376656}. Equations of the form 
$$
F(D^2u,x)=g(x,u)\chi_{\{\nabla u \neq 0\}}
$$
have been studied in \cite{MR1897393}; an example is the stationary equation for the mean-field theory of superconducting vortices when the scalar stream is a function of the scalar magnetic potential \cite{MR1349309, MR1388106, MR1664550}.

Problems of interest are the endpoint $W^{2,\infty}$ regularity estimates of the solution and the regularity of $\Gamma$. The regularity of the solution enables convergence of re-scalings to solutions in half-spaces and a classification then yields information on the geometry of the free boundary near contact points in the sense that in some configurations the intersection occurs non-transversally. Thus the free normal points in the $x_n$-direction at the contact point. The endpoint regularity is delicate since it is sensitive to the sign of the right-hand-side: there exist solutions to 
\begin{equation}  \label{eqF2}
\begin{cases}
\Delta u=-\chi_{u>0} & \text{a.e. in }B_{1}\\
u=g & \text{on }\partial B_{1}
\end{cases}
\end{equation}

\noindent for $g \in C^\infty(\partial B_1)$ which belong to $C^{1,\alpha}\setminus C^{1,1}$ for all $\alpha \in (0,1)$ \cite{MR2289547}. Up to the boundary and interior $C^{1,1}$ estimates were obtained in \cite{MR3513142} and \cite{MR3542613, MR3198649}, respectively, for more general versions of \eqref{eqF} (valid also in higher dimensions; see also \cite{LPS} for applications to double obstacle problems). 

The author proved $C^1$ regularity of the free boundary for non-negative solutions at contact points for 
$$\Omega = (\{\nabla u \neq 0\} \cup \{u \neq 0\}) \cap \{x_n>0\}$$ 
without density assumptions and non-transversal intersection was proved without a sign assumption on the solution \cite{I}. If $F(M)=\text{tr}(M)$ this problem was investigated in \cite{MR1950478, MR2281197} and in \cite{MR2180300} when the $\{u \neq 0\}$ term is removed, cf. \cite{MR2029533}.

The class of solutions for which $||u||_{L^\infty(B_1^+)} \le M$ is denoted by $P_1^+(0,M, \Omega)$. In what follows, tangential touch is shown to hold for 
$$\Omega = \{\nabla u \neq 0\} \cap \{x_2>0\}\subset \mathbb{R}_+^2$$ and a classification of blow-up solutions is given.

\begin{thm} \label{tt}
There exists $r_0>0$ and a modulus of continuity $\omega$ such that 
$$\Gamma(u) \cap B_{r_0}^+ \subset \{x=(x_1,x_2): x_2 \le \omega(|x_1|)|x_1|\}$$ for all $u \in P_1^+(0,M, \Omega)$ provided $0 \in \overline{\Gamma(u)}$.  
\end{thm}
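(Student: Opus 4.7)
The plan is to argue by contradiction via a blow-up/rescaling argument, using the up-to-the-boundary $C^{1,1}$ estimates together with the classification of global blow-up solutions established elsewhere in the paper. Suppose the conclusion fails. Then there exist $\delta_0 > 0$, a sequence $u_k \in P_1^+(0,M,\Omega)$ with $0 \in \overline{\Gamma(u_k)}$, and free boundary points $y^k \in \Gamma(u_k)$ with $d_k := |y^k| \to 0$ and $y_2^k > \delta_0 |y_1^k|$. The goal is to produce a global limit that violates the classification and thereby forces $\omega$ to exist.

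First, I would consider the rescalings $\tilde{u}_k(x) := u_k(d_k x)/d_k^2$ on $B_{1/d_k}^+$, which solve $F(D^2 \tilde{u}_k) = \chi_{\tilde{\Omega}_k}$ with $\tilde{u}_k = 0$ on the rescaled flat portion of the boundary. The up-to-the-boundary $C^{1,1}$ estimates from \cite{MR3513142} give a uniform local $C^{1,1}$ bound on $\{\tilde{u}_k\}$, so after extracting a subsequence, $\tilde{u}_k \to u_0$ in $C^{1,\alpha}_{\loc}(\overline{\mathbb{R}_+^2})$ with $u_0$ a strong solution of the same problem on the whole half-plane, and $F(D^2 u_0) = \chi_{\Omega_0}$ almost everywhere with $u_0=0$ on $\{x_2=0\}$.

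Next, I would track the scaled free boundary points $\xi^k := y^k/d_k$, which lie on the upper unit semicircle and satisfy $\xi_2^k > \delta_0 |\xi_1^k|$. Along a subsequence $\xi^k \to \xi^*$ with $|\xi^*|=1$ and $\xi_2^* > 0$. Using the natural non-degeneracy at free boundary points (quadratic growth of $|\nabla u|$ away from $\Gamma$, inherited from the $C^{1,1}$ bound and $F(D^2 u) = 1$ on $\Omega$), standard arguments yield local Hausdorff convergence $\Gamma(\tilde{u}_k) \to \Gamma(u_0)$, so $\xi^* \in \Gamma(u_0) \cap \{x_2 > 0\}$, and also $0 \in \overline{\Gamma(u_0)}$ since $0 \in \overline{\Gamma(\tilde{u}_k)}$ for every $k$. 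Now the classification of blow-up solutions is invoked: the global solution $u_0$ on $\mathbb{R}_+^2$ must be a rigid one-dimensional half-plane profile whose free boundary is either empty or contained in $\{x_2 = 0\}$. Both alternatives are incompatible with the presence of the interior contact point $\xi^*$, giving the required contradiction. The modulus $\omega$ is then defined by
$$\omega(r) := \sup \left\{ \frac{y_2}{|y_1|} : u \in P_1^+(0,M,\Omega),\ 0 \in \overline{\Gamma(u)},\ y \in \Gamma(u) \cap B_r^+,\ y_1 \neq 0 \right\},$$
and the argument above shows $\omega(r) \to 0$ as $r \to 0^+$.

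The main obstacle is the rigidity in the classification step, where in two dimensions one must rule out rotated half-plane profiles of the form $c((x-x_0)\cdot \nu)_+^2$ with $\nu$ not parallel to $e_2$, as well as any more exotic configuration permitted by $\Omega = \{\nabla u \neq 0\} \cap \{x_2>0\}$. The homogeneous Dirichlet condition $u_0 = 0$ on $\{x_2 = 0\}$ is the crucial lever: it pins down the admissible orientations of any limiting half-plane solution and forces the free normal to point in the $x_2$-direction at the contact point. A secondary difficulty is establishing uniform non-degeneracy for solutions in $P_1^+$ when only $\{\nabla u \neq 0\}$ is used (in place of $\{u > 0\}$), needed for the Hausdorff convergence of free boundaries along the rescaled sequence.
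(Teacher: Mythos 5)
There is a genuine gap in your classification step, and it is precisely the part of the argument that the paper's proof is structured around. Your contradiction argument rescales by $d_k=|y^k|$ and asserts that the limit $u_0$ must be a classified profile with no free boundary in the open half-plane. But the classification available here (Proposition \ref{th1}, quoted from \cite{I}, and its consequence Proposition \ref{ke}) is a dichotomy about the \emph{family} of all blow-up limits of the sequence: either (i) every blow-up limit is a half-space solution $bx_2^2$, or (ii) \emph{some} blow-up limit has the polynomial form $ax_1x_2+bx_2^2$ with $a\neq 0$. In alternative (i) your argument coincides with the paper's: the scaled free boundary points $\xi^k\to\xi^*$ with $\xi_2^*>0$ force $\nabla u_0(\xi^*)=0$, contradicting $\nabla(bx_2^2)\neq 0$ for $x_2>0$; for this no Hausdorff convergence of free boundaries or uniform non-degeneracy is needed, only $C^{1,\alpha}_{\loc}$ convergence. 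In alternative (ii), however, nothing is known about the limit along your particular scales $d_k$ --- the dichotomy does not say that every blow-up is either a half-space or a polynomial profile --- so the step ``$u_0$ must be a rigid profile with empty free boundary'' has no support, and the homogeneous Dirichlet condition (your proposed lever) does not supply it. Incidentally, the polynomial profiles $ax_1x_2+bx_2^2$ are not one-dimensional half-plane solutions, although they too have empty free boundary in $\{x_2>0\}$; the real issue is not their form but that the specific limit you extract need not be classified at all.

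The paper closes this gap by a different mechanism (Proposition \ref{ke}): if a polynomial blow-up with $a\neq 0$ exists along some subsequence, a moving-ball argument combined with Hopf's lemma shows that $\text{Int}\{\nabla\tilde u_j=0\}\cap B_s^+=\emptyset$ for the corresponding rescalings, whence, by non-degeneracy, $u_{k_j}\in C^{2,\alpha}(B_{s_j}^+)$ for large $j$. This is then incompatible with $0\in\overline{\Gamma(u_{k_j})}$: a free boundary point in $B_{s_j/2}^+$ would force $F(D^2u_{k_j})$ to jump between $1$ and $0$ across $\Gamma$ while $D^2u_{k_j}$ is continuous. You correctly flag the rigidity step as the main obstacle, but the resolution is not a finer classification of limiting orientations; it is a regularity statement for the solutions themselves near the origin, proved by the moving-ball/Hopf argument, which your proposal does not contain. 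A minor further point: your $\omega(r)$ takes a supremum over $y\in\Gamma(u)\cap B_r^+$, i.e.\ over $|y|\le r$ rather than $|y_1|\le r$, so it does not directly yield the stated bound $x_2\le\omega(|x_1|)|x_1|$; this is a routine repackaging of the cone statement, not the substantive issue.
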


\begin{thm} \label{cul}
If $u \in P_1^+(0,M, \Omega)$, $0 \in \overline{\{u \neq 0\}}$ and $\nabla u(0)=0$, then the blow-up limit of $u$ at the origin has the form $$u_0(x)=ax_1x_2+bx_2^2$$ for $a, b \in \mathbb{R}$.
\end{thm}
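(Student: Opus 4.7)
The plan is to pass to a blow-up at the origin, use Theorem~\ref{tt} to eliminate the free boundary of the limit inside the open half-space, and classify the resulting solution of the overdetermined problem $F(D^2 u_0) = 1$ on $\mathbb{R}^2_+$ with zero Dirichlet data on $\{x_2=0\}$ by a tangential linearization followed by a Liouville-type argument.

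First, I would set $u_r(x) := u(rx)/r^2$ and invoke the global $C^{1,1}$ estimates of \cite{MR3513142} to obtain a uniform bound on $\{u_r\}$ in $C^{1,1}(\overline{B_R^+})$ for every $R$. Arzel\`a--Ascoli then extracts a subsequence $u_{r_k} \to u_0$ in $C^{1,\alpha}_{\loc}(\overline{\mathbb{R}^2_+})$, and the limit inherits $u_0(0)=0$, $\nabla u_0(0)=0$, $u_0\equiv 0$ on $\{x_2=0\}$, and $|u_0(x)|\le C|x|^2$. The next step is to show $F(D^2 u_0) = 1$ a.e.\ in $\{x_2 > 0\}$ unless $u_0 \equiv 0$ (the latter is of the claimed form with $a=b=0$). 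The hypothesis $0 \in \overline{\{u \neq 0\}}$ combined with $u(0)=0$ and $\nabla u(0)=0$ forces either that $\{\nabla u \neq 0\}$ fills some $B_\rho^+$ (so $F(D^2 u) = 1$ in $B_\rho^+$ directly) or that $0 \in \overline{\Gamma(u)}$, allowing the application of Theorem~\ref{tt}. In the latter case, rescaling the tangential touch gives $\Gamma(u_{r_k}) \cap B_R^+ \subset \{x_2 \le \omega(r_k|x_1|)|x_1|\}$, which collapses to $\{x_2 = 0\}$. Combined with connectedness of $\{x_2 > 0\}$, this forces $\{\nabla u_0 \neq 0\} \cap \{x_2>0\}$ to be either empty or all of $\{x_2>0\}$. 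Evans--Krylov regularity then yields $u_0 \in C^{2,\alpha}_{\loc}(\{x_2>0\})$.

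Next I would linearize in the tangential direction. With $w := \partial_1 u_0$, differentiating $F(D^2 u_0)=1$ and using $F\in C^1$ gives $F_{ij}(D^2 u_0)\,\partial_{ij} w = 0$ in $\{x_2>0\}$, while $w$ vanishes on $\{x_2=0\}$ and $|w(x)| \le C|x|$ by the $C^{1,1}$ bound. The boundary Harnack inequality and Krylov's boundary gradient estimate for linear uniformly elliptic equations with bounded measurable coefficients imply $w/x_2$ extends to a bounded, H\"older continuous function on $\overline{\mathbb{R}^2_+}$, hence is constant by Liouville, so $w(x) = a x_2$ for some $a \in \mathbb{R}$. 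Integrating in $x_1$ yields $u_0(x_1,x_2) = a x_1 x_2 + g(x_2)$ with $g(0)=0$. Substituting the Hessian $D^2 u_0 = \begin{pmatrix} 0 & a \\ a & g''(x_2) \end{pmatrix}$ into $F(D^2 u_0)=1$ and invoking strict monotonicity of $F$ in the $(2,2)$ entry forces $g''$ to be a constant $2b$; finally, $\nabla u_0(0)=0$ removes the linear term in $g$, leaving $u_0(x) = a x_1 x_2 + b x_2^2$.

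The main obstacle is the passage of tangential touch through the blow-up in the alternative that invokes Theorem~\ref{tt}: $C^{1,\alpha}$ convergence of $u_r$ does not directly imply Hausdorff convergence of $\Gamma(u_r)$, so a non-degeneracy estimate for $\nabla u$ at free boundary points (or a careful dichotomy analysis on the rescaled sequence) is required to ensure the free boundary of $u_0$ truly collapses to $\{x_2 = 0\}$. The linearization step is comparatively standard once $F(D^2 u_0) = 1$ is known globally on the half-space.
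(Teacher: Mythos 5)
Your proposal is essentially correct, but it takes a genuinely different route from the paper. The paper's proof is a two-line consequence of Proposition \ref{ke} applied to the constant sequence $u_j\equiv u$: either every blow-up is $bx_2^2$, or $u$ is $C^{2,\alpha}$ up to the flat boundary in some $B_s^+$, so $D^2u(0)$ exists, the blow-up is the second-order Taylor polynomial, and vanishing on $\{x_2=0\}$ kills the $x_1^2$ term. You instead black-box Theorem \ref{tt}, use the rescaled tangential-touch inclusion to show the blow-up limit solves $F(D^2u_0)=1$ in all of $\{x_2>0\}$ (or vanishes identically), and then classify global half-plane solutions with quadratic growth by differentiating tangentially and running a Krylov boundary-H\"older/Liouville argument. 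What your route buys is a genuine classification of global solutions of the Dirichlet problem in the half-plane, without needing $C^{2,\alpha}$ regularity of $u$ itself at the origin; what it costs is length and reliance on Theorem \ref{tt} (which is logically fine here, since the paper proves Theorem \ref{tt} from Proposition \ref{ke}, independently of Theorem \ref{cul}). Two points deserve tightening. First, the ``main obstacle'' you flag is not really an obstacle and does not require non-degeneracy of $\nabla u$: rescaling Theorem \ref{tt} gives $\Gamma(u_{r_k})\cap B_R^+\subset\{x_2\le\omega(r_kR)R\}$, so every fixed connected compact $K\subset\{x_2>0\}$ is eventually disjoint from $\Gamma(u_{r_k})$, hence lies entirely in $\Omega_{r_k}$ or entirely outside $\overline{\Omega_{r_k}}$; after a further subsequence one alternative holds for all $k$, and both alternatives pass to the limit (giving $F(D^2u_0)=1$ on $\inter K$ by stability, or $u_0$ constant on $K$, hence $u_0\equiv 0$ by the boundary condition). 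This is exactly the ``careful dichotomy analysis'' you mention parenthetically, and it closes the gap; you need nothing about Hausdorff convergence of $\Gamma(u_0)$. Second, ``bounded and H\"older continuous, hence constant by Liouville'' is loosely phrased: constancy of $w/x_2$ follows from applying Krylov's oscillation decay at every scale, $\operatorname{osc}_{B_r^+}(w/x_2)\le C(r/R)^\alpha\sup_{B_R^+}|w/x_2|$, together with the global bound $|w|\le Cx_2$ (which you correctly get from $|\partial_{12}u_0|\le C$ and $w=0$ on $\{x_2=0\}$, not merely from $|w|\le C|x|$), and then letting $R\to\infty$. With those two clarifications the argument is complete.
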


\section{Preliminaries} \label{pre}
$F$ is assumed to satisfy the following structural conditions.
\begin{itemize}
\item $F(0)=0$.
\item $F$ is uniformly elliptic with ellipticity constants $\lambda_{0}$, $\lambda_{1}>0$
such that
$$
\mathcal{P}^{-}(M-N)\le F(M)-F(N)\le\mathcal{P}^{+}(M-N),
$$
where $M$ and $N$ are symmetric matrices and $\mathcal{P}^{\pm}$
are the Pucci operators
$$
\mathcal{P}^{-}(M)=\inf_{\lambda_{0} \le N\le\lambda_{1}} \text{tr}(NM),\qquad\mathcal{P}^{+}(M)=\sup_{\lambda_{0}\le N\le\lambda_{1}}\text{tr} (NM).
$$
\item $F$ is convex and $C^1$.
\end{itemize}

Let $\Omega$ be an open set. A continuous function $u$ belongs to $P_r^+(0,M, \Omega)$ if $u$ satisfies in the viscosity sense:\\

\noindent 1. $F(D^2 u)=\chi_\Omega$ a.e. in $B_r^+$;\\
2. $||u||_{L^\infty(B_r^+)} \le M$;\\
3. $u=0$ on $\{x_n=0\} \cap \overline{B_1^+}=:B'_{r}$.\\

Furthermore, given $u \in P_r^+(0,M, \Omega)$, the free boundary is denoted by 
$
\Gamma=\partial \Omega \cap B_r^+.
$ 
A blow-up limit of $\{u_j\} \subset P_1^+(0,M,\Omega)$ is a limit of the form $$\lim_{k \rightarrow \infty} \frac{u_{j_k}(s_kx)}{s_k^2},$$ where $\{j_k\}$ is a subsequence of $\{j\}$ and $s_k \rightarrow 0^+$.

\section{Non-transversal intersection and blow-ups}

In the following propositions, the class $P_1^+(0, M, \Omega)$ is in terms of a general $\Omega$ subject to the stated assumptions. 

\begin{prop} \cite[Proposition 3.6]{I} \label{th1}
Let $\{u_j\} \subset P_1^+(0,M, \Omega)$ and suppose $\{\nabla u_j \neq 0\} \cap \{x_n>0\} \subset \Omega$, $0 \in \overline{\{u_j \neq 0\}}$, and $\nabla u_j(0)=0$. Then one of the following is true:\\
(i) all blow-up limits of $\{u_j\}$ at the origin are of the form $u_0(x)=b x_n^2$ for some $b >0$;\\ 
(ii) there exists a blow-up limit of $\{u_j\}$ of the form $ax_1x_n+bx_n^2$ for $a \neq 0$, $b \in \mathbb{R}$.
\end{prop}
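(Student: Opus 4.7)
My plan is to extract a blow-up limit $u_0$ by compactness, identify it as a $2$-homogeneous global solution in the upper half-plane that vanishes on $\{x_2=0\}$, and classify such solutions in 2D to read off the dichotomy.

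First, I would invoke the interior and boundary $C^{1,1}$ estimates recalled in the introduction to bound $\|u_j\|_{W^{2,p}(B_{1/2}^+)}$ uniformly for every $p<\infty$. Combined with $u_j(0)=0$ and $\nabla u_j(0)=0$, the quadratic rescalings $v_j^s(x):=u_j(sx)/s^2$ then satisfy $|v_j^s(x)|\le C|x|^2$ together with a uniform bound on $\|D^2 v_j^s\|_{L^\infty}$ on compact subsets of $\overline{\RR^2_+}$. A diagonal extraction produces $v_{j_k}^{s_k}\to u_0$ in $C^{1,\alpha}_{\mathrm{loc}}(\overline{\RR^2_+})$; passing to the viscosity limit in the equation and the boundary condition, $u_0\in C^{1,1}_{\mathrm{loc}}(\overline{\RR^2_+})$ satisfies $u_0\equiv 0$ on $\{x_2=0\}$, $u_0(0)=\nabla u_0(0)=0$, $|u_0(x)|\le C|x|^2$, and $F(D^2u_0)=\chi_{\Omega_0}$ on $\RR^2_+$ for some open $\Omega_0\subset\{x_2>0\}$. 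Non-degeneracy for the obstacle-type equation then upgrades the hypothesis $0\in\overline{\{u_j\neq 0\}}$ to $u_0\not\equiv 0$ near the origin.

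Next, I would prove $u_0$ is $2$-homogeneous via a Weiss-type monotonicity formula adapted to the convex, $C^1$, fully nonlinear operator $F$: built from the linearized Dirichlet energy using $F'(D^2u_0)$ together with a boundary term $r^{-5}\int_{\partial B_r^+}u_0^2\,d\sigma$, the resulting functional should be nondecreasing in $r$, and its constancy on a blow-up forces $u_0(rx)=r^2u_0(x)$ for $r>0$. Writing $u_0(r,\theta)=r^2g(\theta)$ with $g(0)=g(\pi)=0$ then converts $F(D^2u_0)=\chi_{\Omega_0}$ into a second-order ODE for $g$ on each angular sector where $\chi_{\Omega_0}$ is locally constant; the $C^{1,1}$ bound on $u_0$, together with the uniform ellipticity and convexity of $F$, forces $g$ to be globally of the form $g(\theta)=a\sin\theta\cos\theta+b\sin^2\theta$, i.e., $u_0(x)=ax_1x_2+bx_2^2$ for some $a,b\in\RR$.

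Finally, I would separate the two alternatives. If some blow-up has $a\neq 0$, alternative (ii) holds. Otherwise every blow-up is of the form $u_0=bx_2^2$; non-triviality rules out $b=0$, and the inclusion $\{\nabla u_j\neq 0\}\cap\{x_2>0\}\subset\Omega$ passes to the limit to give $\{\nabla u_0\neq 0\}\cap\{x_2>0\}\subset\Omega_0$, forcing $\Omega_0=\{x_2>0\}$ and hence $F(\mathrm{diag}(0,2b))=1$; combined with $F(0)=0$ and monotonicity of $F$, this excludes $b<0$ and yields $b>0$, as required in alternative (i). The main obstacle is the identification of $u_0$ as a quadratic polynomial: the Weiss-type formula for convex fully nonlinear operators is delicate because $\chi_{\Omega_0}$ carries no a priori regularity and the linearization $F'(D^2u_0)$ has only $L^\infty$ coefficients on a solution that is only $C^{1,1}$; once $2$-homogeneity is available, the 2D ODE reduction and the sign analysis are essentially algebraic.
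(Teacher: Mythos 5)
The paper does not actually prove this proposition---it is imported verbatim from \cite[Proposition 3.6]{I}---so your attempt can only be judged on its own merits, and there it has one load-bearing gap. Your entire classification of blow-up limits rests on ``a Weiss-type monotonicity formula adapted to the convex, $C^1$, fully nonlinear operator $F$,'' but no such formula is known, and the difficulty is not merely technical: the quantity you propose is built from the linearization $F'(D^2u_0)$, whose coefficients depend on the unknown solution itself and change with the scale, which destroys precisely the cancellation that makes the Weiss functional monotone for $\Delta u=\chi_\Omega$. The point of the fully nonlinear literature this paper sits in (Figalli--Shahgholian, Indrei--Minne, and \cite{I} itself) is to classify blow-ups \emph{without} monotonicity formulas, e.g.\ by uniform $C^{1,1}$ compactness plus directional-monotonicity/Liouville arguments for tangential derivatives $\partial_e u_0$, which solve a linear uniformly elliptic equation in the half-space, vanish on $\{x_n=0\}$, and have linear growth. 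As written, the central step of your proof invokes a tool that does not exist, so the argument does not close.

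Two further points would need repair even if homogeneity were granted. First, passing to the limit in $F(D^2u_j)=\chi_{\Omega_j}$ does not automatically yield $F(D^2u_0)=\chi_{\Omega_0}$ for an open $\Omega_0$: weak-$*$ limits of characteristic functions need not be characteristic functions, and the inclusion $\{\nabla u_j\neq0\}\cap\{x_n>0\}\subset\Omega$ does not pass to blow-up limits without an argument (your deduction that $\Omega_0=\{x_n>0\}$ when $u_0=bx_n^2$ relies on this; a cleaner route is to note $F(2be_n\otimes e_n)\le 2b\lambda_0<0$ for $b<0$, contradicting $F(D^2u_0)\ge0$ a.e.). Second, the polar ODE step is underspecified: $F(D^2(r^2g(\theta)))=\chi_{\Omega_0}$ is a fully nonlinear relation in $(g,g',g'')$, and the claim that the $C^{1,1}$ bound forces $g(\theta)=a\sin\theta\cos\theta+b\sin^2\theta$ is asserted rather than proved. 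Finally, note that the proposition as stated is dimension-general (it is phrased in $x_n$), whereas your argument is confined to $n=2$; that is harmless for this paper's purposes but does not reprove the cited result.
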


\begin{prop} \label{ke}
Suppose $\{u_j\} \subset P_1^+(0,M, \Omega)$. If $\{\nabla u_j \neq 0\} \cap \{x_2>0\} \subset \Omega$, $0 \in \overline{\{u_j \neq 0\}}$ and $\nabla u_j(0)=0$, then one of the following is true:\\
(i) all blow-up limits of $\{u_j\}$ at the origin are of the form $u_0(x)=bx_2^2$ for $b>0$;\\
(ii) there exists $\{u_{k_j}\} \subset \{u_j\}$, $j_1 \in \mathbb{N}$, and $s_j>0$ such that for all $j \ge j_1$, $$u_{k_j} \in C^{2,\alpha}(B_{s_j}^+).$$ 
\end{prop}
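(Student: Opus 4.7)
The plan is to invoke Proposition \ref{th1} as a dichotomy. Its conclusion (i) coincides with conclusion (i) of the present statement, so I may assume conclusion (ii) of Proposition \ref{th1}: there exists a subsequence, which I relabel $\{u_{k_j}\}$, and scales $s_j \to 0^+$ such that the rescalings $v_j(x) := u_{k_j}(s_j x)/s_j^2$ converge in $C^{1,\alpha}_{\loc}(\overline{\mathbb{R}_+^2})$ to the blow-up $u_0(x) = a x_1 x_2 + b x_2^2$ with $a \neq 0$; compactness is supplied by the interior and boundary $C^{1,1}$ estimates of \cite{MR3513142, MR3542613, MR3198649}.

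Since $C^{2,\alpha}$ regularity is invariant under $u \mapsto u(s\cdot)/s^2$, showing $u_{k_j} \in C^{2,\alpha}(B_{s_j}^+)$ reduces to showing $v_j \in C^{2,\alpha}(B_1^+)$. Because $v_j$ solves $F(D^2 v_j) = \chi_{\Omega_{k_j}/s_j}$ with the preserved inclusion $\{\nabla v_j \neq 0\} \cap \{x_2 > 0\} \subset \Omega_{k_j}/s_j$, the desired regularity follows from the interior $C^{2,\alpha}$ estimate for constant-right-hand-side fully nonlinear uniformly elliptic equations \cite{MR1376656} once one shows that $\nabla v_j$ is a.e. non-vanishing on $B_1^+$ for $j$ large. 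The gradient of the limit, $\nabla u_0(x) = (a x_2,\, a x_1 + 2 b x_2)$, satisfies $|\nabla u_0(x)| \ge |a| x_2 > 0$ on $\{x_2 > 0\}$, with degeneration only at the origin.

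Two types of potential gradient zero are to be excluded. First, an open connected set $U \subset B_1^+$ on which $\nabla v_j \equiv 0$, where $v_j$ would be constant: if $\overline{U}$ touches $\{x_2 = 0\}$, the Dirichlet datum forces $v_j \equiv 0$ on $\overline{U}$, incompatible with $v_j \to u_0$ because $u_0$ fails to vanish identically off the one-dimensional null set $\{x_2 = 0\} \cup \{a x_1 + b x_2 = 0\}$; if $\overline{U}$ is contained in a fixed strip $\{x_2 \ge \delta\}$, the uniform bound $|\nabla v_j| \ge |a|\delta/2$ on compacts contradicts $\nabla v_j \equiv 0$; the intermediate regime $\overline{U} \subset \{\delta_j \le x_2 \le \delta_j + \eta_j\}$ with $\delta_j \to 0^+$ is closed by comparing the constant value of $v_j$ on $U$ with the $x_1$-variation $u_0(x_1, \delta_j) - u_0(x_1', \delta_j) = a(x_1 - x_1')\delta_j$ of $u_0$. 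Second, an isolated free-boundary zero $\xi_j \in \Gamma(v_j) \cap B_1^+$: the $C^{1,\alpha}$ convergence forces $\xi_j \to 0$, and I would rule it out by a second-scale blow-up $w_j(y) := v_j(|\xi_j| y)/|\xi_j|^2$, itself a blow-up of $\{u_{k_j}\}$ at scale $s_j |\xi_j| \to 0^+$. Any $C^{1,\alpha}_{\loc}$-subsequential limit $w_\infty$ is, by Proposition \ref{th1} applied to $\{u_{k_j}\}$, of the form $\beta x_2^2$ with $\beta > 0$ or $\alpha x_1 x_2 + \beta x_2^2$ with $\alpha \neq 0$. The unit direction $\zeta := \lim \xi_j/|\xi_j|$ satisfies $|\zeta| = 1$, $\zeta_2 \ge 0$, and $\nabla w_\infty(\zeta) = 0$: the second case gives $\zeta = 0$ and contradicts $|\zeta| = 1$; the first case forces the tangential degeneracy $\zeta = (\pm 1, 0)$.

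The remaining tangential subcase is closed by a boundary Hopf-type bound on the linearized derivative $\phi_j := \partial_{x_1} v_j$, which solves the uniformly elliptic linear equation $F_{ij}(D^2 v_j) \partial_{ij} \phi_j = 0$ on $\Omega_{k_j}/s_j$ with zero Dirichlet data on $\{x_2 = 0\}$ and converges to the strictly positive $a x_2$ on $\{x_2 > 0\}$; boundary Harnack then yields $\phi_j(x) \gtrsim x_2 > 0$ on $B_1^+$, contradicting $\phi_j(\xi_j) = 0$. The principal technical obstacle is this Hopf step --- propagating the strict positivity of the linearized limit up to the tangential contact direction $(\pm 1, 0)$ despite the potentially irregular structure of the open set $\Omega_{k_j}/s_j$ on which the linearized equation is defined. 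Once both gradient-zero types are excluded, $\nabla v_j$ is non-vanishing on $B_1^+$ for $j$ large, $F(D^2 v_j) = 1$ a.e. on $B_1^+$, and interior $C^{2,\alpha}$ regularity \cite{MR1376656} concludes.
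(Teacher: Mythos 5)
Your opening matches the paper: invoke Proposition \ref{th1}, dispose of alternative (i), and reduce alternative (ii) to showing that the rescalings $v_j$ have no gradient-degenerate set in a small half-ball. The proof collapses, however, at the core step of excluding open components $U$ of $\{\nabla v_j=0\}$. Your three-way split on the location of $\overline U$ does not close: the dangerous case is a small component $U_j$ sitting at height $\delta_j>0$ with $\delta_j$ comparable to, or smaller than, the rate $\eps_j$ of the $C^{1,\alpha}_{\loc}$ convergence $v_j\to u_0$. There, $v_j\equiv\mathrm{const}$ on $U_j$ is perfectly compatible with the convergence, since the $x_1$-oscillation of $u_0$ across $U_j$ at height $\delta_j$ is $a\,\delta_j\cdot\diam(U_j)$, which is dominated by the error $\eps_j\cdot\diam(U_j)$ once $\delta_j\lesssim\eps_j/a$; neither your ``touches $\{x_2=0\}$'' subcase (which requires $\overline U$ to actually reach the boundary and, even then, gives no contradiction for small $U$ because $u_0$ is itself small there) nor your ``intermediate regime'' comparison survives this. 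This is precisely the hard configuration, and it is also the one you flag but do not resolve in your second-type analysis, where you explicitly leave the tangential subcase dependent on a boundary Harnack/Hopf bound on the irregular domain $\Omega_{k_j}/s_j$ and call it ``the principal technical obstacle.'' As written, the argument is therefore incomplete.

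The paper closes exactly this gap with a single moving-ball argument that needs no boundary Harnack inequality and works for degenerate components of any size at any height. Fixing $R$ and a thin strip $S=\{0<x_2<\eta,\ -R<x_1<R\}$, one has $v:=\partial_{x_1}\tilde u_j\ge 0$ on $\partial S$ and $Lv=0$ in $S\cap\Omega_j$, hence $v>0$ in $S\cap\Omega_j$; in particular $\tilde u_j<m$ on a disk $\tilde B$ placed in $\{-R<x_1<-R/2\}$ on the same horizontal strip as a disk $B$ where $\tilde u_j\equiv m$. Translating $\tilde B$ in the $e_1$-direction until its boundary first touches $\{\tilde u_j=m\}$ at some $y$, Hopf's lemma applied to $w=\tilde u_j-m<0$ (using $F(D^2\tilde u_j)\ge 0$) gives $\partial_n\tilde u_j(y)>0$, hence a ball $B_\mu(y)\subset\Omega_j$ on which $v>0$, so $\tilde u_j$ strictly increases past $y$ in the $e_1$-direction --- contradicting that $y$ was a first touching point of the level set $\{\tilde u_j=m\}$. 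Note also that even after $\mathrm{Int}\{\nabla\tilde u_j=0\}\cap B_s^+=\emptyset$ is established, one still needs the non-degeneracy of solutions (as the paper invokes) to upgrade ``empty interior'' to the a.e.\ statement $\chi_{\Omega_j}=1$ that feeds the $C^{2,\alpha}$ theory; your write-up passes from the exclusion of open components and isolated points directly to ``$\nabla v_j$ is a.e.\ non-vanishing'' without this step.
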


\begin{proof}
Either all blow-up limits are of the form $u_0(x)=bx_2^2$ or there exists a subsequence, say $$\tilde u_j(x)=\frac{u_{k_j}(r_jx)}{r_j^2},$$ producing a limit of the form $u_0(x)=ax_1x_2+bx_2^2$ for $a>0$ (up to a rotation). 
Pick $R \ge 1$ and note that since $\tilde u_j \rightarrow u_0$ in $C_{loc}^{1,\alpha}$, there exists $j_R \in \mathbb{N}$ such that for $j \ge j_R$, $|\nabla \tilde u_j|>\tilde c$ in 
$$E=\big(\{-R<x_1<-\frac{R}{2}\}\cup \{\frac{R}{2}<x_1<R\}\big)\cap B_{2R}^+,$$ and therefore, $\tilde u_j \in C^{2,\alpha}(E)$. In particular, up to a subsequence, 
$$\frac{|\partial_{x_1} \tilde u_j-ax_2|}{x_2}=\omega_j \rightarrow 0$$ so that
$$
\partial_{x_1} \tilde u_j \ge \frac{a}{2}x_2
$$ 
in $E$ for $j \ge \tilde j_R \in \mathbb{N}$. Now pick $\eta \in (0,R)$ and select $j_0'$ so that if $j \ge j_0'$, 
$$\partial_{x_1} \tilde u_j(x) \ge \frac{a \eta}{2}$$ 
for $x \in \{x_n \ge \eta \}$. Fix $s <\min\{\eta, \frac{R}{2}\}$ and suppose that for some $j \ge \max\{j_0', \tilde j_R\}$, 
$$\text{Int}\{ \nabla \tilde u_j =0 \} \cap B_s^+\neq \emptyset.$$ 
Let $S=\{0 < x_2 < \eta, -R< x_1 < R\}$ so that $v:=\partial_{x_1} \tilde u_j\ge 0$ on $\partial S$. By differentiating the equation, $Lv=0$ on $S \cap \Omega_j$, where $L$ is a linear second order uniformly elliptic operator. Since $v$ vanishes on $\partial \Omega_j$, it follows that $v>0$ in $S \cap \Omega_j$. Next note that for a disk $B \subset B_s^+$ such that $\nabla \tilde u_j =0$ in $B$, $u_j=m$ for some $m \in \mathbb{R}$ and there is a strip $\tilde S$ generated by translating the disk in the $x_1$-direction. Select another disk 
$$\tilde B \subset \tilde S \cap \{-R < x_1 < -\frac{R}{2}\}$$ 
and let $E_t=\tilde B+te_1$ for $t \in \mathbb{R}$. Since $v>0$ in $S \cap \Omega_j$ and $v=0$ in $\Omega_j^c$, it follows that $u < m$ on $\tilde B$. Denote by $t^*>0$ the first value for which $\partial E_t$ intersects $\{\tilde u_j=m\}$ and let $y \in \partial E_{t^*} \cap \{\tilde u_j=m\}$. Note that $F(D^2 \tilde u_j) \ge 0$ and $w=\tilde u_j-m<0$ in $E_{t^*}$ with $w(y)=0$, therefore by Hopf's principle (see e.g. \cite{MR2994551}), $\partial_n \tilde u_j(y) >0$. Therefore, there is $\mu>0$ such that $B_\mu(y) \subset \Omega_j$. In particular, $v>0$ on $B_\mu(y)$ and since $v = 0$ on $\Omega_j^c$, there is $p>0$ such that $\tilde u_j(y+e_1p)>m$ for $y+e_1p \in \{\tilde u_j = m\}$, a contradiction. Therefore, $\text{Int}\{ \nabla \tilde u_j =0 \} \cap B_s^+= \emptyset$ and non-degeneracy implies the claim.   
\end{proof}    

\begin{proof}[proof of Theorem \ref{tt}]
If not, then there exists $\epsilon>0$ such that for all $k \in \mathbb{N}$ there exists $u_k \in P_1^+(0,M, \Omega)$ with 
\begin{equation} \label{cont}
\Gamma(u_k) \cap B_{1/k}^+ \cap \mathcal{C}_\epsilon \neq \emptyset, 
\end{equation}
where $0 \in \overline{\Gamma(u_k)}.$ If all blow-ups of $\{u_k\}$ are half-space solutions, let $x_k \in \Gamma(u_k) \cap B_{1/k}^+ \cap \mathcal{C}_\epsilon$ and set $y_k=\frac{x_k}{r_k}$ with $r_k=|x_k|$. Consider $\tilde u_k(x)=\frac{u_k(r_kx)}{r_k^2}$ so that $y_k \in \Gamma(\tilde u_k)$, $\tilde u_k \rightarrow bx_2^2$, $y_{k} \rightarrow y \in \partial B_1 \cap C_\epsilon$ (up to a subsequence), and $y \in \Gamma(u_0)$, a contradiction. Therefore, Proposition \ref{ke} implies the existence of a subsequence $\{u_{k_j}\}$ of $\{u_k\}$ such that for all $j \ge j_1$, $u_{k_j} \in C^{2,\alpha}(B_{s_{j}}^+)$, where $j_1 \in \mathbb{N}$. Since $0 \in \overline{\Gamma(u_{k_j})},$ there exists $$x_j \in \Gamma(u_{k_j}) \cap B_{\frac{s_{j}}{2}}^+$$ which contradicts the continuity of $F$.   
\end{proof}

\begin{proof}[proof of Theorem \ref{cul}]
By Proposition \ref{ke}, either $u_0(x)=bx_2^2$ or $D^2u(0)$ exists and the rescaling of $u$ is given by $$u_j(x)=\frac{u(r_j x)}{r_j^2}=\langle x, D^2u(0)x\rangle + o(1).$$ Since $u_0(x_1,0)=0$, it follows that $u_0$ has the claimed form (up to a rotation). 
\end{proof}

\noindent \text{\bf Acknowledgments} I learned the moving ball technique from John Andersson and thank him for interesting discussions.

\bibliographystyle{alpha}

\bibliography{ngonref2}

\signei

\end{document}